\documentclass[12pt]{amsart}
\usepackage{amssymb}
\usepackage{amsfonts}
\usepackage{latexsym}
\usepackage{amsmath}

\usepackage{color}

\usepackage{graphicx}

\newtheorem{thm}{Theorem}[section]

\newtheorem{corollary}[thm]{Corollary}
\newtheorem{lemma}[thm]{Lemma}

\newtheorem{theorem}[thm]{Theorem}

\newtheorem{claim}[thm]{Claim}

\newtheorem{remark}[thm]{Remark}

\numberwithin{equation}{section}

\newcommand{\bbC}{{\Bbb C}}
\newcommand{\CC}{{\widehat{\bbC}}}
\newcommand{\C}{{\mathbb C}}

\newcommand{\R}{{\mathbb R}}

\newcommand{\N}{{\mathbb N}}

\begin{document}

\title[Random Backward Iteration Algorithm for Julia sets]{Random Backward Iteration Algorithm for Julia sets of Rational Semigroups}

\author{Rich Stankewitz}
\address[Rich Stankewitz]
    {Department of Mathematical Sciences\\
    Ball State University\\
    http://rstankewitz.iweb.bsu.edu/}
\email{rstankewitz@bsu.edu}

\author{Hiroki Sumi}
\address[Hiroki Sumi]
    {Department of Mathematics\\
    Graduate School of Science\\
    Osaka University\\
    1-1, Machikaneyama, Toyonaka\\
    Osaka, 560-0043, Japan\\
    http://www.math.sci.osaka-u.ac.jp/$\sim $sumi/welcomeou-e.html}
\email{sumi@math.sci.osaka-u.ac.jp}

\begin{abstract}
We provide proof that a random backward iteration algorithm, previously proven to work in the context of iteration of a rational function of degree two or more, extends to rational semigroups (of a certain type).  We also provide some consequences of this result.
\end{abstract}

\date{ \today }

\maketitle

\section{Introduction}

The work of Boyd~\cite{Boyd2}, later generalized by Sumi~\cite{Su3}, provides a formal justification for a method we call the ``full backward iteration algorithm" for generating graphical computer approximations of Julia sets  of finitely generated rational semigroups where each map is of degree two or more.  This method has been used by the present authors and many others to generate such fractal images.  However, it was also noted that the ``random backward iteration algorithm"  (also known as the ``ergodic method" or "chaos game" method) seemed to always provide the same pictures, though formal proof had not yet been given, except in specialized examples (see~\cite{Fujishima}).  The goal of this paper is to provide a formal proof of the strict mathematical sense in which the random backward iteration algorithm works.  Analogous results, for both the ``full" and ``random" methods, in the context of attractor sets for contracting iterated function systems (see~\cite{Hutchinson, BEH, Elton, RSThesis}) and Julia sets of a iterated single rational functions (see~\cite{HawkinsTaylor, Lyu, Mane, FLM}) are known.  In these mentioned works the benefits of the random (backward) iteration method over the full (backward) iteration method in terms of simplicity, speed, and memory are described and so we will not repeat these same benefits here.  Both the full and random methods for both attractor sets of iterated function systems and Julia sets of rational semigroups are implemented using the freely available application Julia 2.0~\cite{Julia2.0}.

The Julia set (defined below) is the set of points where there is chaos or instability in the dynamics, and as such is of great interest in many settings.  As described in~\cite{SumiRandom, SumiCooperation} the setting of rational semigroups is of real importance in the study of, say, population dynamics of a species which, unlike the iteration case, has multiple choices of survival strategies to employ from year to year.  Methods for drawing Julia sets for the more general setting of rational semigroups are more limited than those available for iteration.  For example, the iteration methods which are based on the presence of an attracting fixed point (e.g., using an ``escape to infinity criterion" for polynomials $z^2+c$) require the complete invariance of the Julia and Fatou sets and are thus not adaptable to semigroups where these sets only have partial invariance.  It is, however, interesting to note that in the context of polynomial semigroup dynamics, studying the probability $P(z)$ that a random forward obit starting at $z$ ``escapes" to infinity can produce quite beautiful pictures and intriguing mathematics (see~\cite{SumiRandom, SumiCooperation}). In fact, the function $P(z)$ is often a continuous function on all of $\C$, but is almost everywhere flat, varying only on the thin fractal Julia set of the related semigroup.  Hence, extending to the semigroup setting the backward iteration method is important as it adds to the number of reliable ways one can now computer graphically illustrate this important locus of instability, i.e., the Julia set.

We now introduce the basic terminology and notions needed to describe the full and random methods mentioned above.

Let $G=\langle f_1, \dots, f_k \rangle$ be a rational semigroup generated by non-constant rational maps $f_j$ where the semigroup operation is composition of functions, i.e., $G$ is the collections of all maps which can be expressed as a finite composition of maps from the generating set $\{f_j:j=1, \dots, k\}$.

Research on the dynamics of rational
semigroups was initiated by Hinkkanen and Martin
in~\cite{HM1}, where each rational semigroup was always taken to have at least one element of degree at least two.  Though the semigroups $G$ to which our main theorem apply will meet this criteria, we also have need to define the relevant notions of Fatou and Julia set for semigroups of M\"obius maps.  (For an in depth look at the dynamics of M\"obius semigroups see~\cite{FMS}.)  Also, Ren, Gong, and
Zhou studied rational semigroups from the perspective of random
dynamical systems (see~\cite{ZR,GR}).

We thus follow~\cite{HM1} in saying that for a rational semigroup $G$ the \textit{Fatou set} $F(G)$ is the set of points in $\CC$ which have a neighborhood on which $G$ is normal, and its complement in $\CC$ is called the \textit{Julia set} $J(G)$.  The more classical Fatou
set and Julia set of the cyclic semigroup $\langle g \rangle$ generated by a single map (i.e., the collection of iterates $\{g^n: n \geq 1\}$) is
denoted by $F(g)$ and $J(g)$, respectively.

We quote the following results from~\cite{HM1}. The Fatou set $F(G)$
is \textit{forward invariant} under each element of $G$, i.e.,
$g(F(G)) \subseteq F(G)$ for all $g \in G$, and thus $J(G)$ is
\textit{backward invariant} under each element of $G$, i.e.,
$g^{-1}(J(G)) \subseteq J(G)$ for all $g \in G$.

We should take a moment to note that the sets $F(G)$ and $J(G)$ are,
however, not necessarily completely invariant under the elements of
$G$. This is in contrast to the case of \textit{iteration} dynamics,
i.e., the dynamics of \textit{cyclic} semigroups generated by a single function. For a treatment of alternatively defined
\textit{completely} invariant Julia sets of rational semigroups the
reader is referred to~\cite{RSThesis, RS1, RS2, SSS}.

\begin{remark}\label{AssumptionsG}
For the rest of this paper we assume that the rational semigroup $G=\langle f_1, \dots, f_k \rangle$ satisfies the following conditions:
\begin{enumerate}
  \item The semigroup $G$ contains at least one element of degree two or more.
  \item The exceptional set $E(G)$, set of points $z$ with a finite backward orbit $\cup_{g \in G} g^{-1}(\{z\})$, is contained in $F(G)$.  Note that~\cite{HM1} shows $\textrm{card}(E(G)) \leq 2$.
  \item The M\"obius semigroup $H=\{ h^{-1}: h\in G \mbox{ is Mobius}\} $ satisfies $F(H) \supseteq J(G)$ (setting $F(H)=\CC$ when $H=\emptyset$).
\end{enumerate}
\end{remark}

Note that~\cite{Boyd2} requires the stronger assumption that $G=\langle f_1, \dots, f_k \rangle$ contain no M\"obius maps, from which all three of the above assumptions follow.  These relaxed assumptions on $G$ are used in~\cite{Su3} to show the following, whose proof we include here for completeness.

\begin{lemma}\label{LemmaK}
Letting $G$ be as in Remark~\ref{AssumptionsG} and given any $a \in \CC \setminus E(G)$, there exists a compact set $K \subseteq \CC \setminus E(G)$ that is backward invariant under $G$ and that contains $a$.
\end{lemma}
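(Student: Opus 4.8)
The plan is to take $K$ to be the closure of the total backward orbit of $a$, namely
$$ K = \overline{O^-(a)}, \qquad O^-(a) := \{a\} \cup \bigcup_{g \in G} g^{-1}(\{a\}), $$
and to check the three required properties in turn. Compactness is immediate, since $K$ is a closed subset of the compact space $\CC$, and $a \in O^-(a) \subseteq K$ by construction. For backward invariance I would first note that $O^-(a)$ is itself backward invariant straight from the definition (if $h(w)=a$ and $g(z)=w$ then $(hg)(z)=a$, so preimages of backward-orbit points are again backward-orbit points). I would then promote this to the closure using that every non-constant rational map is open: for an open continuous map one has $g^{-1}(\overline{S}) \subseteq \overline{g^{-1}(S)}$, so $g^{-1}(K) \subseteq \overline{g^{-1}(O^-(a))} \subseteq \overline{O^-(a)} = K$.

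The substance of the argument is showing $K \cap E(G) = \emptyset$, and the first ingredient I would establish is that $E(G)$ is in fact \emph{completely} invariant, not merely backward invariant. Backward invariance is the remark that $O^-(e) \subseteq E(G)$ for each $e \in E(G)$, since $h(w)=e$ forces $O^-(w) \subseteq O^-(e)$, which is finite. For forward invariance I would use surjectivity of non-constant rational maps on $\CC$: for each generator $f_j$ and each $e' \in E(G)$ there is some $w$ with $f_j(w)=e'$, and then $w \in f_j^{-1}(E(G)) \subseteq E(G)$, so $E(G) \subseteq f_j(E(G))$; as $E(G)$ is finite (its cardinality is at most two, as quoted) the image $f_j(E(G))$ can be no larger, whence $f_j(E(G)) = E(G)$ and so $g(E(G)) = E(G)$ for every $g \in G$. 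Consequently $\CC \setminus E(G)$ is backward invariant, and since $a \notin E(G)$ this already gives $O^-(a) \subseteq \CC \setminus E(G)$; in particular no exceptional point can lie in $O^-(a)$ itself.

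What remains, and what I expect to be the main obstacle, is ruling out that $O^-(a)$ \emph{accumulates} at a point $e \in E(G)$. Here I would invoke Assumption (2), that $E(G) \subseteq F(G)$, to select a neighborhood $V$ of $e$ on which $G$ is normal. Suppose toward a contradiction that $z_n \in O^-(a)$ with $z_n \to e$, say $g_n(z_n)=a$ for suitable $g_n \in G$. Passing to a subsequence, normality gives $g_n \to \phi$ locally uniformly on $V$ in the spherical metric $d$. By complete invariance $g_n(e) \in E(G)$ for every $n$, and since $E(G)$ is finite I may pass to a further subsequence along which $g_n(e) = e^*$ is a fixed point of $E(G)$. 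Equicontinuity on $V$ together with $z_n \to e$ yields $d\big(g_n(z_n), g_n(e)\big) \to 0$, so $a = \lim_n g_n(z_n) = \lim_n g_n(e) = e^* \in E(G)$, contradicting $a \notin E(G)$.

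Combining the three paragraphs, $K = \overline{O^-(a)}$ is a compact, backward invariant set containing $a$ and disjoint from $E(G)$, as required. The only delicate point is the accumulation step, where the interplay of normality (from $E(G) \subseteq F(G)$) with the complete invariance and finiteness of $E(G)$ does the work; the rest is bookkeeping about backward orbits and the openness of rational maps.
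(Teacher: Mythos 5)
Your proof is correct, but it takes a genuinely different route from the paper's. The paper argues by cases on $\textrm{card}(E(G))$ and builds $K$ as the complement of a union of small hyperbolic disks centered at the exceptional points: since $E(G) \subseteq F(G)$ and $E(G)$ is forward invariant, Pick's theorem (no $g \in G$ expands the hyperbolic metric of the forward invariant Fatou set) makes that union of disks forward invariant, hence its complement is backward invariant, and shrinking $\delta$ puts $a$ inside. You instead take the minimal candidate $K = \overline{O^-(a)}$, for which compactness and backward invariance are essentially free (your use of openness of non-constant rational maps to push backward invariance to the closure, via $g^{-1}(\overline{S}) \subseteq \overline{g^{-1}(S)}$, is the right device), and you concentrate all the work in showing the backward orbit cannot accumulate on $E(G)$. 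There you use the same two ingredients as the paper --- complete invariance of $E(G)$ (your surjectivity-plus-finiteness argument matches the paper's one-line claim) and assumption (2) that $E(G) \subseteq F(G)$ --- but you exploit normality directly through equicontinuity of $G$ near $e$ (Arzel\`a--Ascoli on the compact sphere), pinning down $a = \lim g_n(z_n) = \lim g_n(e) \in E(G)$ for a contradiction, rather than through the hyperbolic metric. Your version avoids the case analysis and the appeal to $\textrm{card}(J(G)) \geq 3$ (which the paper needs so that Fatou components carry hyperbolic metrics). What the paper's construction buys in exchange is uniformity: its $K$ is the complement of a fixed small neighborhood of $E(G)$, a single compact set that serves every starting point $a$ outside that neighborhood at once, whereas your $K$ depends on $a$; both versions satisfy the lemma as stated (and both contain $J(G)$, since your $K$ is an infinite closed backward invariant set precisely because $a \notin E(G)$).
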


\begin{proof}
If $E(G)= \emptyset$, then $K=\CC$ will suffice.  Now suppose $\textrm{card}(E(G))=2$.  Since $E(G)$ is a finite backward invariant set under $G$, it is also forward invariant under $G$.  Since $\textrm{card}(J(G)) \geq 3$ (which follows since $J(f) \subseteq J(G)$ for all $f \in G$ and some $f$ has degree two or more), each component of $F(G)$ supports a hyperbolic metric.  Let $D$ be the union of two disjoint hyperbolic disks of radius $\delta$ each centered at a point in $E(G)$.  Since $F(G)$ is forward invariant under $G$, Pick's Theorem implies that no $g \in G$ can expand the hyperbolic distance.  Hence $D$ must be forward invariant under $G$.  Hence, $K=\CC \setminus D$ is backward invariant.  Choosing $\delta$ small enough then ensures that $a \in K$.  The remaining case $\textrm{card}(E(G))=1$ is handled similarly.
\end{proof}

Note that because $J(G)$ is the smallest closed backward invariant set containing three or more points (see~\cite{HM1}), we must have $J(G) \subseteq K$.

\section{Full backward iteration algorithm}\label{SectionFull}
Call $d=d_1+\dots +d_k$, where each $d_j=\deg f_j$.


\noindent \textbf{Notation:}  For each $z \in \CC$ and $j=1, \dots, k$, the equation $f_j(w)=z$ has $d_j$ solutions (counted according to multiplicity).  Giving these solutions an arbitrary (but fixed) order $z_{1, j}, \dots, z_{d_j, j}$, and doing so for all $z \in \CC$ and all $j=1, \dots, k$, allows us to define a collection of $d$ right inverses of the maps $f_j$, i.e., $d_j$ right inverses for each $f_j$.  Specifically, we choose $g_1(z)=z_{1, 1}, \dots, g_{d_1}(z)=z_{d_1, 1}$ and $g_{d_1+1}(z)=z_{1,2}, \dots, g_{d_1+d_2}(z)=z_{d_2, 2}$, \dots, $g_{d_1+\dots+ d_{k-1}+1}(z)=z_{1,k}, \dots, g_{d}(z)=z_{d_k,k}$.

\begin{remark}\label{Arbitrary}
Because the order of the $z_{i,j}$ is arbitrary, we cannot expect the maps $g_i$ to be true inverse branches of the $f_j$.  In fact, the order of the $z_{i,j}$ can be chosen to make each $g_i$ discontinuous at every point.  Due to the stochastic methods in what follows, however, we do not require the $g_i$ to be more than the labels that they are under this construction.  However, we do note that if $z$ is a point which is not a critical value of any of the $f_j$, then the $g_i$ may be chosen to be well defined local branches in a neighborhood of $z$ and thus be analytic there.
\end{remark}


Let $a \in \CC \setminus E(G)$ be fixed and use Lemma~\ref{LemmaK} to choose a compact set $K \subseteq \CC \setminus E(G)$ that is backward invariant under $G$ and that contains $a$.  We often choose to write  $z_{i_1, i_2, \dots, i_n}=g_{i_n}\circ \dots \circ g_{i_1}(a)$.
Note that $z_{i_1, i_2, \dots, i_n}$ depends on the initial choice of $a$ though we suppress this dependence in our notation.

Given a probability vector $b=(b_1, \dots, b_k)$, i.e., each $b_j>0$ and $\sum_{j=1}^k b_j =1$, we let $\pi_b$ be the probability measure on $\{1, \dots, d\}$ given by $\pi_b(i)=\frac{b_j}{d_j}$ whenever $d_0 +\dots +d_{j-1}+1 \leq i \leq d_0 +\dots + d_j$, where we set $d_0=0$.  We let $b'=(b_1', \dots, b_k')$ be the probability vector such that each $b_j'=\frac{d_j}{d}$ noting that this gives the uniform measure $\pi_{b'}(i)=\frac1d$ for all $i=1, \dots, d$.

For $j=1, \dots, k$, let $\nu_j^{a,b}$ be the measure on $\CC$ of total mass $b_j$ given by
$$\nu_j^{a,b} = \sum_{i=d_0 +\dots +d_{j-1}+1}^{d_0 +\dots + d_j} \pi_b(i) \delta_{z_i} = \frac{b_j}{d_j} \sum_{i=d_0 +\dots +d_{j-1}+1}^{d_0 +\dots + d_j} \delta_{z_i}$$
where $\delta_z$ denotes the unit point mass measure at $z$.

We now define probability measures $\mu_n^{a,b}$ on $\CC$ as follows:

$$\mu_1^{a,b}=\nu_1^{a,b}+\dots+\nu_k^{a,b}=\sum_{i_1=1}^d \pi_b({i_1}) \delta_{z_{i_1}}
=\sum_{i_1=1}^d \pi_b({i_1}) \delta_{g_{i_1}(a)}$$
and, in general, for $n>1$
\begin{align*}
\mu_n^{a,b}
&= \sum_{i_1, i_2, \dots, i_n =1}^d \pi_b({i_1})\cdots \pi_b({i_n}) \delta_{z_{i_1, i_2, \dots, i_n}}\\
&=\sum_{i_1, i_2, \dots, i_n =1}^d \pi_b({i_1})\cdots \pi_b({i_n}) \delta_{g_{i_n}\circ \dots \circ g_{i_1}(a)}.
\end{align*}
Note that since $K$ is backward invariant under $G$, the support of each $\mu_n^{a,b}$ is in $K$.

Following~\cite{Boyd2} and~\cite{Su3}, we define, for each $j=1, \dots, k$, a bounded linear operator $T_j=T_j^b$ on the space $C(\CC)$ of continuous functions (endowed with the sup norm $\|\cdot\|_\infty$) on $\CC$ by
\begin{equation}
(T_j \phi)(z)=\int_K \phi(s) \, d\nu_j^{z,b}(s) = \frac{b_j}{d_j} \sum_{i=d_0 +\dots +d_{j-1}+1}^{d_0 +\dots + d_j}  \phi(g_i z).
\end{equation}
Hence, each operator $T_j$ is $b_j$ times the average of the values of $\phi$ evaluated at the $d_j$ preimages of $z$ under $f_j$ (and so $\|T_j\|=b_j$).  Note that although each $g_i$ may fail to be continuous, since the \textbf{set of solutions} of $f_j(w)=z$ vary continuously in $z$, the map $T_j \phi$ is continuous.

Now define the bounded linear operator $T=T^b = \sum_{j=1}^k T_j$ which gives
\begin{equation}
(T \phi)(z) = \sum_{j=1}^k (T_j\phi)(z)=\int_K \phi(s) \, d\mu_1^{z,b}(s) = \sum_{i_1=1}^d \pi_b({i_1}) \phi(g_{i_1} z),
\end{equation}
and has $\|T\|=1$.  Hence, $(T \phi)(z)$ is a weighted average of $\phi$ evaluated at all $d$ preimages of $z$ under all generators $f_j$.  Note also that all preimages under the same $f_j$ carry the same weight (namely $\frac{b_j}{d_j}$) as determined by the carefully constructed $\pi_b$.

\begin{remark}
The continuity of $(T \phi)(z)$ (which is crucial for our purposes) depends critically on the preimages under the same $f_j$ carrying the same weight (though, as we have allowed above, the preimages under different $f_j$ may have different weights).  As an example, consider the following situation for $G=\langle z^2 \rangle$.  Here $d=2$ (and $b=(1)$ is trivial) and suppose $\pi(1)=0.9$ and $\pi(2)=0.1$ are unequal weights given to the preimages under the map $z^2$.  As we may do, choose $g_1(z)=-\sqrt{z}$ and $g_2(z)=+\sqrt{z}$ for all $z \neq 1$, but set $g_1(1)=1$ and $g_2(1)=-1$.  The measure $\mu_1^z$ is then given as $\mu_1^z = 0.9 \delta_{g_1(z)} + 0.1 \delta_{g_2(z)}$ for all $z$.  However, for $\phi \in C(\CC)$, we have $(T \phi)(z)=\int \phi \, d\mu_1^z = 0.9 \phi(g_1z)+0.1 \phi (g_2z)= 0.9 \phi(-\sqrt{z})+0.1 \phi (+\sqrt{z}) \to 0.9 \phi(-1)+0.1 \phi (1)$ as $z \to 1$ with $z \neq 1$.  However, $(T \phi)(1)=\int \phi \, d\mu_1^1 = 0.9 \phi(g_1(1))+0.1 \phi (g_2(1))= 0.9 \phi(1)+0.1 \phi (-1)$, which, for a proper choice of $\phi$ shows $(T \phi)(z)$ to fail to be continuous.  From this example we can clearly see that anywhere that $g_1$ fails to be continuous poses a similar problem and thus this problem (of trying to have unequal weights for preimages under the same map) is robust since, as a branch of the inverse of $z^2$, $g_1$ must be discontinuous somewhere.
\end{remark}

Letting $\mathcal{P}(\CC)$ denote the space of probability Borel measures on $\CC$, and noting that it is a compact metric space in the topology of weak* convergence, we have that the adjoint $T^*:\mathcal{P}(\CC) \to \mathcal{P}(\CC)$ is given by
\begin{equation}
(T^* \rho)(A)=\int \mu_1^{z,b}(A)\, d\rho(z)
\end{equation}
for all Borel sets $A \subseteq \CC$.  Using the operator notation $\langle \phi, \rho \rangle = \int \phi \, d\rho$ we express the action of the adjoint as $\langle T \phi, \rho \rangle = \langle \phi, T^* \rho \rangle$.  Note that the map $\CC \to \mathcal{P}(\CC)$ given by $z \mapsto \mu_1^{z,b}$ is continuous since for a sequence $z_n \to z_0$ in $\CC$, we have $\langle \phi, \mu_1^{z_n,b} \rangle = (T\phi)(z_n) \to (T\phi)(z_0)=\langle \phi, \mu_1^{z_0,b} \rangle$ for any $\phi \in C(\CC)$, i.e., $\mu_1^{z_n,b} \to \mu_1^{z,b}$.

The work of Lyubich~\cite{Lyu} and Friere, Lopes, and Ma\~n\'{e}~\cite{FLM, Mane} on the existence of an invariant measure on the Julia set of a single rational function was generalized to semigroups by the work of Boyd (for $b=b'$ with each $d_j \geq 2$) and Sumi (for general $b$).  The result is the following theorem.

\begin{theorem}[\cite{Boyd2, Su3}]\label{FullMethod}
Let $G$ be as in Remark~\ref{AssumptionsG} and let $b=(b_1, \dots, b_k)$ be a probability vector.  Then the measures $\mu_n^{a,b}$ converge weakly to a regular Borel probability measure $\mu^{b}=\mu^{b}_G$ independently of and locally uniformly in $a \in \CC \setminus E(G)$.  Further, the closed support of $\mu^{b}$ is $J(G)$ and $T^*\mu^{b} = \mu^{b}$.
\end{theorem}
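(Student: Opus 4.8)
The plan is to recognize the measures as iterates of the adjoint transfer operator and reduce the entire statement to a convergence theorem for $T$ on $C(\CC)$. Since $(T^n\phi)(a)=\langle\phi,\mu_n^{a,b}\rangle$ and $\mu_n^{a,b}=(T^*)^n\delta_a$, the asserted weak convergence, the independence of $a$, and the local uniformity in $a$ all follow at once if I can show: for every $\phi\in C(\CC)$ the functions $T^n\phi$ converge, locally uniformly on $\CC\setminus E(G)$, to a constant, which I will then identify as $\langle\phi,\mu^b\rangle$. Observe that $T\mathbf 1=\mathbf 1$, so $T^*$ preserves $\mathcal P(\CC)$, and I will produce $\mu^b$ as a fixed point of $T^*$. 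The argument has three moving parts: precompactness of $\{T^n\phi\}$, constancy of every subsequential limit, and identification of the limiting constant.

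First I would fix a compact, backward invariant $K\subseteq\CC\setminus E(G)$ (Lemma~\ref{LemmaK}) and establish that the restrictions $T^n\phi|_K$ are uniformly bounded (immediate from $\|T\|=1$) and equicontinuous. Writing $T^n\phi(z)=\sum_{|I|=n}\pi_b(I)\,\phi(g_{i_n}\circ\cdots\circ g_{i_1}(z))$ with $\pi_b(I)=\pi_b(i_1)\cdots\pi_b(i_n)$, equicontinuity reduces to a uniform contraction estimate on the iterated backward branches: nearby $z,z'\in K$ must have uniformly close images under every composition $g_{i_n}\circ\cdots\circ g_{i_1}$. This is the main obstacle, and it is exactly here that all three hypotheses of Remark~\ref{AssumptionsG} enter: the presence of a degree $\geq 2$ element forces expansion of the forward dynamics, hence contraction of the backward branches, off a controlled set; $E(G)\subseteq F(G)$ lets me keep the branches away from the points where this contraction fails; and the condition $F(H)\supseteq J(G)$ on the inverse M\"obius semigroup tames the degree-one generators, whose inverse branches do not contract on their own. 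I expect to phrase this through normality (Montel) of the relevant branch families together with Pick's theorem for the hyperbolic metric, and then apply Arzel\`a--Ascoli to extract, after exhausting $\CC\setminus E(G)$ by such sets $K$, locally uniformly convergent subsequences $T^{n_j}\phi\to\psi$ on $\CC\setminus E(G)$.

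Next I would show any such $\psi$ is constant. Passing to the limit in $T(T^{n_j}\phi)=T^{n_j+1}\phi$, using the already-established continuity of $z\mapsto\mu_1^{z,b}$, yields $T\psi=\psi$, i.e. $\psi(z)=\sum_i\pi_b(i)\psi(g_i z)$ is a convex combination with strictly positive weights. A maximum-principle argument then applies on the compact backward invariant $K$: if $\psi$ attains its maximum $M$ at $z_0$, every preimage $g_i z_0\in K$ must also realize $M$, so $\{\psi=M\}\cap K$ is backward invariant and contains the entire backward orbit of $z_0$. Since $z_0\notin E(G)$ this orbit accumulates on $J(G)$, whence $\psi\equiv M$ on $J(G)$; the same reasoning for the minimum forces $\psi$ to be constant on $K$, with common value equal to its value on $J(G)$. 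Because every $a\in\CC\setminus E(G)$ lies in some such $K$ (Lemma~\ref{LemmaK}) and the value on $J(G)$ does not depend on $K$, $\psi$ equals a single constant throughout $\CC\setminus E(G)$.

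Finally I would pin down the constant and the support. Weak* compactness of $\mathcal P(\CC)$ together with continuity of $T^*$ produces a fixed measure $\mu^b$ with $T^*\mu^b=\mu^b$, for instance as a weak* limit of Ces\`aro averages. For any subsequential limit constant $c=\lim_j T^{n_j}\phi$ one has $\langle\phi,\mu^b\rangle=\langle T^{n_j}\phi,\mu^b\rangle\to c$, so $c=\langle\phi,\mu^b\rangle$ is forced, independent of the subsequence; combined with the precompactness from the second step, this upgrades to full locally uniform convergence $T^n\phi\to\langle\phi,\mu^b\rangle$ and simultaneously shows that $\mu^b$ is the unique $T^*$-fixed probability measure. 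For the support, choosing $a\in J(G)$ keeps every $\mu_n^{a,b}$ supported in $J(G)$ by backward invariance of $J(G)$, so $\operatorname{supp}\mu^b\subseteq J(G)$; conversely $T^*\mu^b=\mu^b$ makes $\operatorname{supp}\mu^b$ a closed backward invariant set, and it is infinite (a finite backward invariant subset of $J(G)$ would lie in $E(G)\subseteq F(G)$, which is impossible), so minimality of $J(G)$ among closed backward invariant sets with at least three points gives $J(G)\subseteq\operatorname{supp}\mu^b$, hence equality.
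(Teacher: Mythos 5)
First, a framing point: the paper you were given does not prove this theorem at all --- it is imported wholesale from \cite{Boyd2, Su3} (Boyd for $b=b'$ with every $d_j\ge 2$, Sumi for the general case under the hypotheses of Remark~\ref{AssumptionsG}), so your attempt has to be measured against those proofs. Your architecture is in fact the same Lyubich-style scheme they follow: since $\mu_n^{a,b}=(T^*)^n\delta_a$, everything reduces to showing $T^n\phi$ tends locally uniformly on $\CC\setminus E(G)$ to a constant, identified via a $T^*$-fixed measure; and your steps for constancy of subsequential limits, identification of the constant, and $\operatorname{supp}\mu^b=J(G)$ are essentially sound. The genuine gap is exactly where you flag ``the main obstacle'': precompactness/equicontinuity of $\{T^n\phi\}$ on $K$. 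Worse than being left unproven, your proposed reduction is to a false statement: you require that nearby $z,z'\in K$ have uniformly close images under \emph{every} composition $g_{i_n}\circ\cdots\circ g_{i_1}$. No such branch-wise uniform estimate holds in general: each passage of a pullback near a simple critical value costs a square root in the modulus of continuity, and these losses compound along compositions whose backward orbits return near the postcritical set; moreover the inverse branches of M\"obius generators do not contract at all and are controlled only through hypothesis (3). What Lyubich, Boyd, and Sumi actually prove is an \emph{averaged} statement: outside a collection of branches of small total $\pi_b$-weight, pullbacks of a small disk have small diameter, and since $|T^n\phi(z)-T^n\phi(z')|\le\sum_{|I|=n}\pi_b(I)\,|\phi(g_Iz)-\phi(g_Iz')|$ this suffices. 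Establishing that estimate --- counting critical values of $n$-fold compositions, Koebe-type distortion or area arguments, normality of the inverse M\"obius family --- is the analytic core of the cited papers. ``Montel plus Pick plus Arzel\`a--Ascoli'' does not produce it; note in particular that Pick's theorem (as used in Lemma~\ref{LemmaK}) gives non-expansion of the \emph{forward} maps on $F(G)$, which says nothing about backward branches over $K\supseteq J(G)$, where $J(G)$ may well meet $P(G)$.

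Two smaller repairs are also needed. In your constancy step, $T\psi=\psi$ does not follow merely by ``passing to the limit in $T(T^{n_j}\phi)=T^{n_j+1}\phi$'': that exhibits $T\psi$ as the limit of a shifted subsequence, a priori a different limit function. The standard fix uses backward invariance of $K$ to get the monotonicity $\max_K T^{n+1}\phi\le\max_K T^n\phi$ and $\min_K T^{n+1}\phi\ge\min_K T^n\phi$, so that every subsequential limit $\chi$ has the same $\max_K\chi=M_\infty$ and $\min_K\chi=m_\infty$, and one then runs your maximum-principle/backward-orbit-density argument on those quantities. Separately, your parenthetical claim that $\mu^b$ is \emph{the} unique $T^*$-fixed probability measure is false whenever $E(G)\neq\emptyset$: for $G=\langle z^2,z^3\rangle$ both $\delta_0$ and $\delta_\infty$ are $T^*$-fixed. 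Uniqueness holds only among measures whose support avoids the exceptional points, which is precisely how the paper states it in Lemma~\ref{Unique} (and the theorem itself claims no uniqueness). Neither of these sinks the outline; the equicontinuity step does --- as written, your proposal assumes essentially the entire content of the theorem it sets out to prove.
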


\begin{remark}
Theorem~\ref{FullMethod} provides the justification for the ``full backward iteration algorithm" used to graphically approximate $J(G)$.  This method simply plots the $d^n$ (not necessarily distinct) points in the support of $\mu_n^{a,b}$.  We note that this iterative process plots all $d$ inverses of each of the $d^{n-1}$ points in the support of $\mu_{n-1}^{a,b}$ to generate the support of $\mu_n^{a,b}$.  Also, note that the support of $\mu_n^{a,b}$ is independent of $b$ (but not of $a$), which merely adjusts the weights on the point masses.
\end{remark}

Using Theorem~\ref{FullMethod}, the same argument given in Lemma 2.6 of~\cite{HawkinsTaylor} shows the following.

\begin{lemma}\label{Unique}
Let $G$ be as in Remark~\ref{AssumptionsG} and let $b=(b_1, \dots, b_k)$ be a probability vector.  Then the measure $\mu^b$ is the unique probability measure on $\CC$ that has support disjoint from the set of exceptional points and that is invariant under $T^*$.
\end{lemma}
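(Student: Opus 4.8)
The plan is to combine the convergence statement of Theorem~\ref{FullMethod} with the observation that a $T^*$-invariant measure is automatically invariant under every iterate $(T^*)^n$. First I would record, by iterating the formula for $T$, that $T^n$ integrates $\phi$ against the $n$-th backward measure based at $z$:
\[
(T^n\phi)(z)=\sum_{i_1,\dots,i_n=1}^d \pi_b(i_1)\cdots\pi_b(i_n)\,\phi(g_{i_n}\circ\cdots\circ g_{i_1}(z))=\langle \phi, \mu_n^{z,b}\rangle
\]
for every $\phi\in C(\CC)$ and every $z\in\CC$. Theorem~\ref{FullMethod} then tells us that $(T^n\phi)(z)\to\langle\phi,\mu^b\rangle$ as $n\to\infty$, locally uniformly in $z\in\CC\setminus E(G)$.

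Next, suppose $\rho$ is any probability measure on $\CC$ whose support $L=\mathrm{supp}(\rho)$ is disjoint from $E(G)$ and which satisfies $T^*\rho=\rho$. Iterating the invariance gives $(T^*)^n\rho=\rho$, and so, using the adjoint relation, for each fixed $\phi$ we have
\[
\langle\phi,\rho\rangle=\langle\phi,(T^*)^n\rho\rangle=\langle T^n\phi,\rho\rangle=\int_L (T^n\phi)(z)\,d\rho(z)
\]
for all $n$. Since the left-hand side does not depend on $n$, the result will follow once the limit as $n\to\infty$ can be brought inside the integral on the right.

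The main obstacle is precisely justifying this interchange, and this is where the disjointness hypothesis on the support is used. Because $\CC$ is compact, $L$ is a compact subset of the \emph{open} set $\CC\setminus E(G)$, so the local uniform convergence furnished by Theorem~\ref{FullMethod} upgrades to genuine uniform convergence on $L$ (cover $L$ by finitely many neighborhoods on which the convergence is uniform). Consequently
\[
\Big|\langle\phi,\rho\rangle-\langle\phi,\mu^b\rangle\Big|=\Big|\int_L\big((T^n\phi)(z)-\langle\phi,\mu^b\rangle\big)\,d\rho(z)\Big|\le \sup_{z\in L}\big|(T^n\phi)(z)-\langle\phi,\mu^b\rangle\big|\longrightarrow 0,
\]
where we use that $\rho$ is a probability measure. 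Hence $\langle\phi,\rho\rangle=\langle\phi,\mu^b\rangle$ for every $\phi\in C(\CC)$, which forces $\rho=\mu^b$. To close the argument I would note that $\mu^b$ itself qualifies: Theorem~\ref{FullMethod} gives $T^*\mu^b=\mu^b$, and its closed support $J(G)$ is disjoint from $E(G)$ since $E(G)\subseteq F(G)=\CC\setminus J(G)$ by the standing assumptions of Remark~\ref{AssumptionsG}. Thus $\mu^b$ is the unique such measure.
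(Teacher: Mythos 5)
Your proposal is correct and is essentially the argument the paper relies on: the paper proves this lemma by invoking the argument of Lemma 2.6 of~\cite{HawkinsTaylor} together with Theorem~\ref{FullMethod}, and that argument is exactly your scheme of iterating the adjoint relation to get $\langle\phi,\rho\rangle=\langle T^n\phi,\rho\rangle=\int \langle\phi,\mu_n^{z,b}\rangle\,d\rho(z)$ and passing to the limit using the locally uniform convergence of $\mu_n^{z,b}\to\mu^b$ on the compact support of $\rho$, which avoids $E(G)$. The only cosmetic difference is that you upgrade to uniform convergence on the support, where one could instead use pointwise convergence plus the bounded convergence theorem (since $\|T^n\phi\|_\infty\le\|\phi\|_\infty$); both are valid.
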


\section{Random backward iteration algorithm}

Let $\Sigma_d^+ = \prod_{n=1}^\infty \{1, \dots, d\}$ denote the space of one-sided sequences on $d$ symbols, regarded with the usual topology and $\sigma$-algebra of Borel sets.   Now let
$P_b=\pi_b \times \pi_b \times \pi_b \times \dots$ be the Bernoulli measure on $\Sigma_d^+$ which is then given on basis elements as follows:  for fixed $j_n$ for $n=1, \dots, m$, we have $P_b(\{(i_1, i_2, \dots) \in \Sigma_d^+:i_n=j_n \textrm{ for all } n=1, \dots, m\})= \pi_b(j_1) \cdots \pi_b(j_m)$.

%
%

Let us define a random walk as follows.  Starting at a point $z_0=a \in \CC$, we note that there are $d$ preimages (counting according to multiplicity) of $a$ under the generators of the semigroup $G$.  We randomly select $z_1$ to be one of these preimages by a two step process, selecting a map $f_j$ with probability $b_j$ and then selecting one preimage under that map $f_j$ with (necessarily uniform) probability $\frac{1}{d_j}$.  Note that this process is the same as randomly selecting $i_1$ from $\{1, \dots, d\}$ with probability $\pi_b(i_1)$ and setting $z_1=g_{i_1}(z_0)$.  Likewise, $z_2$ is randomly selected to be one of the $d$ preimages of $z_1$.  Continue in this fashion to generate what we call a \textit{random backward orbit} $\{z_n\}$ of $z_0=a$ under the semigroup $G$.

Utilizing the notation introduced in Section~\ref{SectionFull}, we see that $\Sigma_d^+$ generates the entire set of backward orbits starting at $z_0=a$ by letting the sequence $(i_1, i_2, \dots)$ generate the backward orbit $\{g_{i_n}\circ \dots \circ g_{i_1}(z_0)\}=\{z_{i_1, i_2, \dots, i_n}\}$.  We note that the map $(i_1, i_2, \dots) \mapsto \{g_{i_n}\circ \dots \circ g_{i_1}(z_0)\}$  might not be one-to-one since more than one sequence could generate the same backward orbit.

Formally, we define our random walk $\{z_n\}$ in terms of random variables $\{Z_n\}$ given as follows.  For each $n \in \N$, we let $Z_n=Z_n^b:(\Sigma_d^+,P_b) \to \CC$ by $Z_n(i_1, i_2, \dots) = z_{i_1, i_2, \dots, i_n}$ and $Z_0 \equiv a$.  Hence, $Z_{n+1}(i_1, i_2, \dots)=g_{i_{n+1}}Z_n(i_1, i_2, \dots)$.  Note that each $Z_n$ is measurable, and, in fact, continuous, despite the arbitrariness involved in labeling the $g_i$, since for any $B \subseteq \CC$, the set $\{Z_n \in B\}=\{(i_1, i_2, \dots) \in \Sigma^+_d:Z_n(i_1, i_2, \dots) \in B\}$ is a union of cylinders of the form $\{i_1\} \times \{i_2\} \times \dots \times \{i_n\}\times \{1, \dots, d\} \times \{1, \dots, d\} \times \dots$.  This is immediate since the value of $Z_n$ depends only on the first $n$ coordinates of $(i_1, i_2, \dots)$.

%

\begin{claim}\label{Markov}
Let $G$ be as in Remark~\ref{AssumptionsG}, let $a \in \CC \setminus E(G)$, and let $b=(b_1, \dots, b_k)$ be a probability vector.  Then the stochastic process $\{Z_n:n=0, 1, 2, \dots\}$ forms a Markov process with transition probabilities $\{\mu_1^{z,b}\}$, i.e., for each Borel set $B \subseteq \CC$ we have $P_b(\{Z_{n+1} \in B|Z_0, \dots, Z_n\}) = \mu_1^{Z_n,b}(B)$.
\end{claim}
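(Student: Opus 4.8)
The plan is to verify the Markov property directly by conditioning on the natural filtration and exploiting two structural features of the construction: that each $Z_n$ depends only on the first $n$ coordinates of $\Sigma_d^+$, and that the Bernoulli measure $P_b$ makes the coordinates independent. Let $\mathcal{F}_n$ denote the $\sigma$-algebra on $\Sigma_d^+$ generated by the first $n$ coordinate projections. Since $Z_0, \dots, Z_n$ are each functions of $(i_1, \dots, i_n)$ alone, we have $\sigma(Z_0, \dots, Z_n) \subseteq \mathcal{F}_n$, so it suffices to compute $E[\mathbf{1}_B(Z_{n+1}) \mid \mathcal{F}_n]$ and observe that the answer is already $\sigma(Z_n)$-measurable.

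First I would use the recursion $Z_{n+1} = g_{i_{n+1}}(Z_n)$ to write, for a fixed Borel set $B \subseteq \CC$,
\[
\mathbf{1}_B(Z_{n+1}) = \sum_{j=1}^d \mathbf{1}_{\{i_{n+1} = j\}} \, \mathbf{1}_B\bigl(g_j(Z_n)\bigr).
\]
Here each factor $\mathbf{1}_B(g_j(Z_n))$ is $\mathcal{F}_n$-measurable: because $Z_n$ is constant on each length-$n$ cylinder (it takes only the finitely many values $z_{i_1, \dots, i_n}$), the composition $g_j \circ Z_n$ is again constant on those cylinders and hence a simple, $\mathcal{F}_n$-measurable function. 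This sidesteps the fact that the individual branches $g_j$ may be everywhere discontinuous.

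Next I would take conditional expectations. Pulling the $\mathcal{F}_n$-measurable factors out and using that $i_{n+1}$ is independent of $\mathcal{F}_n$ under the product measure $P_b$, so that $E[\mathbf{1}_{\{i_{n+1}=j\}} \mid \mathcal{F}_n] = \pi_b(j)$, gives
\[
E[\mathbf{1}_B(Z_{n+1}) \mid \mathcal{F}_n] = \sum_{j=1}^d \pi_b(j)\, \mathbf{1}_B\bigl(g_j(Z_n)\bigr) = \sum_{j=1}^d \pi_b(j)\, \delta_{g_j(Z_n)}(B) = \mu_1^{Z_n,b}(B),
\]
the last equality being the definition of $\mu_1^{z,b}$ evaluated at $z = Z_n$. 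Since the right-hand side is a function of $Z_n$ alone, it is $\sigma(Z_0, \dots, Z_n)$-measurable, and the tower property (conditioning the displayed quantity down from $\mathcal{F}_n$ to the smaller $\sigma(Z_0, \dots, Z_n)$) yields $P_b(Z_{n+1} \in B \mid Z_0, \dots, Z_n) = \mu_1^{Z_n,b}(B)$, as claimed.

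This argument is essentially bookkeeping once the two structural observations are in place, so I do not anticipate a genuine obstacle. The only point meriting care is the measurability of $g_j(Z_n)$ in the presence of discontinuous inverse branches; this is resolved cleanly by the fact that each $Z_n$ is a simple function, constant on cylinders of length $n$, which is precisely the property the paper has already used to establish the continuity of $Z_n$.
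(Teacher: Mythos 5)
Your proof is correct and is essentially the paper's argument in different packaging: both rest on decomposing $\{Z_{n+1}\in B\}$ over the value of the symbol $i_{n+1}$ (your identity $1_B(Z_{n+1})=\sum_{j=1}^d 1_{\{i_{n+1}=j\}}\,1_B(g_j(Z_n))$ is exactly the paper's partition into the sets $C_i=\{Z_{n+1}\in B\}\cap\{i_{n+1}=i\}$), on the independence of $i_{n+1}$ from the first $n$ coordinates under the Bernoulli measure $P_b$, and on the finite range of $Z_n$ to dispose of the measurability problems caused by the arbitrary labeling of the $g_i$. The only differences are cosmetic---you condition on the coordinate filtration $\mathcal{F}_n$ and descend by the tower property, while the paper verifies the defining identity $\int_D 1_{\{Z_{n+1}\in B\}}\,dP_b=\int_D \mu_1^{Z_n,b}(B)\,dP_b$ directly for $D\in\sigma(Z_0,\dots,Z_n)$---though be aware that your closing assertion that ``a function of $Z_n$ alone is $\sigma(Z_0,\dots,Z_n)$-measurable'' is false for general random variables composed with non-measurable functions (and the $g_i$ need not be Borel); it holds here precisely because $Z_n$ is finitely valued, which is the same simple-function observation you already invoked for $\mathcal{F}_n$-measurability and which the paper records as $\{Z_n\in A\}=\{Z_n\in Z_n(\Sigma_d^+)\cap A\}$.
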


\begin{proof}
Let $D$ be in $\mathcal{D}$, the sigma algebra generated by $Z_0, Z_1,\dots, Z_n$, and let $B \subseteq \CC$ be Borel.  We will show that
the conditional probability
$P_b(Z_{n+1} \in B|Z_0,Z_1,\dots, Z_n)= \sum_{i=1}^d \pi_b(i) 1_B (g_i Z_n)=\mu_1^{Z_n,b}(B)$ by verifying that
$$\int_D 1_{\{Z_{n+1} \in B\}}\,\,dP_b=\int_D \sum_{i=1}^d \pi_b(i) 1_B (g_i
Z_n) \,\,dP_b$$
where $\sum_{i=1}^d \pi_b(i) 1_B (g_i Z_n)=\mu_1^{Z_n,b}(B)$ is $\mathcal{D}$-measurable.

To demonstrate measurability, consider \textit{any} set $A \subseteq \CC$ and note that $\{Z_n \in A\}=\{Z_n \in Z_n(\Sigma_d^+)\cap A\}$.  Since $Z_n(\Sigma_d^+)$ is a finite set, we see that $\{Z_n(\Sigma_d^+)\cap A\}$ is finite and hence Borel.  Thus, $\{Z_n \in A\} \in \mathcal{D}$.  This allows one to quickly show that each function $1_B (g_i Z_n)$ is $\mathcal{D}$-measurable, and hence so is $ \sum_{i=1}^d \pi_b(i) 1_B (g_i Z_n)$.

For each $i=1, \dots, d$, let $D_i=\{Z_n \in g_i^{-1}B\} \cap D$ and let $C_i=\{Z_{n+1} \in B\}
\cap \{i_{n+1}=i\}$.  Note that $\{i_{n+1}=i\}$ and $D_i \in \mathcal{D}$ are independent.    From this it follows that, for $i=1, \dots, d$,
\begin{align*}
\pi_b(i) P_b(D_i)
&=\pi_b(i) \frac{P_b(D_i \cap \{i_{n+1}=i\})} {P(\{i_{n+1}=i\})} \\
&=\pi_b(i) \frac{P_b(\{Z_n \in g_i^{-1}B\} \cap D \cap \{i_{n+1}=i\})}{\pi_b(i)} \\
&= P_b(\{g_i Z_{n} \in B\} \cap
D \cap \{i_{n+1}=i\})\\
&= P_b(\{g_{i_{n+1}} Z_{n} \in B\} \cap
D \cap \{i_{n+1}=i\})\\
&= P_b(\{Z_{n+1} \in B\} \cap
D \cap \{i_{n+1}=i\})\\
&= P_b(C_i \cap D).
\end{align*}

With this and the fact that the $C_i$'s are disjoint with
$\cup_{i=1}^d C_i=\{Z_{n+1} \in B\}$, we calculate
\begin{align*}
&\int_D 1_{\{Z_{n+1} \in B\}}\,\,dP_b
=\int_D \sum_{i=1}^d 1_{C_i}\,\,dP_b
=\sum_{i=1}^d P_b(C_i \cap D)
=\sum_{i=1}^d \pi_b(i) P_b(D_i)\\
&=\sum_{i=1}^d \pi_b(i) P_b(D \cap \{Z_n \in g_i^{-1}B\})
=\sum_{i=1}^d \pi_b(i) \int_D 1_{\{Z_n \in g_i^{-1}B\}}\,\,dP_b\\
&=\int_D \sum_{i=1}^d \pi_b(i) 1_{g_i^{-1}B}(Z_n)\,\,dP_b
=\int_D \sum_{i=1}^d \pi_b(i) 1_B (g_i Z_n) \,\,dP_b.
\end{align*}
\end{proof}

The main result can now be stated in terms of the probability measures, defined for each $n \in \N$ and $(i_1, i_2, \dots) \in \Sigma_d^+$, by
\begin{equation}
\mu_{i_1, \dots, i_{n}}^{a} = \frac1n \sum_{j=1}^n  \delta_{z_{i_1, i_2, \dots, i_j}}.
\end{equation}

\begin{theorem}\label{main}
Let $G$ be as in Remark~\ref{AssumptionsG}, let $a \in \CC \setminus E(G)$, and let $b=(b_1, \dots, b_k)$ be a probability vector.  Then, for $P_b$ a.a. $(i_1, i_2, \dots) \in \Sigma_d^+$, the probability measures $\mu_{i_1, \dots, i_{n}}^{a}$ converge weakly to $\mu^{b}$ in $\mathcal{P}(\CC)$.
\end{theorem}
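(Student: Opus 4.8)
The plan is to read $\mu_{i_1,\dots,i_n}^a = \frac1n\sum_{j=1}^n \delta_{Z_j}$ as the sequence of occupation (time-average) measures of the Markov chain $\{Z_n\}$ and to apply an ergodic theorem for Feller chains whose only substantive hypothesis, unique ergodicity, is furnished by Lemma~\ref{Unique}. First I would restrict the state space to the compact set $K$ from Lemma~\ref{LemmaK}. Since $K$ is backward invariant under $G$, for $z \in K$ the transition measure $\mu_1^{z,b}$ is supported on $\{g_i z : i = 1,\dots,d\} \subseteq K$; hence $T^*$ carries $\mathcal{P}(K)$ into itself and, because $Z_0 \equiv a \in K$, the chain stays in $K$ almost surely. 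By Claim~\ref{Markov}, $\{Z_n\}$ is then a Markov chain on the compact metric space $K$ with kernel $z \mapsto \mu_1^{z,b}$, and this kernel is Feller since $z \mapsto \mu_1^{z,b}$ is weak* continuous (equivalently $T$ maps $C(\CC)$ into $C(\CC)$), as recorded before Theorem~\ref{FullMethod}.

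Next I would verify that the chain on $K$ is uniquely ergodic with invariant measure $\mu^b$. Compactness of $\mathcal{P}(K)$ together with $T^*(\mathcal{P}(K)) \subseteq \mathcal{P}(K)$ yields, by a Krylov--Bogolyubov argument, at least one $T^*$-invariant $\rho \in \mathcal{P}(K)$. Any such $\rho$ has $\mathrm{supp}\,\rho \subseteq K \subseteq \CC \setminus E(G)$, so its support avoids the exceptional set; Lemma~\ref{Unique} then forces $\rho = \mu^b$. Thus $\mu^b$ is the unique invariant probability measure of the chain on $K$, which is consistent with $\mathrm{supp}\,\mu^b = J(G) \subseteq K$ from Theorem~\ref{FullMethod}.

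With the Feller property and unique ergodicity established, Breiman's strong law of large numbers for Markov chains---the same tool used in the single-map case (cf.~\cite{HawkinsTaylor})---gives, for each fixed $\phi \in C(\CC)$, that $\frac1n\sum_{j=1}^n \phi(Z_j) \to \int \phi\,d\mu^b$ for $P_b$-almost every sequence. Since under $P_b$ the process $\{Z_n\}$ is exactly this chain started at $a$, this reads $\langle \phi, \mu_{i_1,\dots,i_n}^a\rangle \to \langle \phi, \mu^b\rangle$ almost surely. To obtain weak convergence of the measures themselves on a single full-measure event I would invoke separability of $C(\CC)$: fix a countable dense family $\{\phi_m\}$, intersect the countably many corresponding full-measure sets, and then extend to arbitrary $\phi \in C(\CC)$ by an $\varepsilon/3$ estimate, which is legitimate because every $\mu_{i_1,\dots,i_n}^a$ is a probability measure.

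The step demanding the most care---the genuine content beyond bookkeeping---is the passage to \emph{almost sure} convergence from the deterministic, non-stationary start $z_0 = a$. Convergence of the means is cheap: writing $\psi = \phi - \langle\phi,\mu^b\rangle$, Theorem~\ref{FullMethod} gives $(T^m\psi)(z) \to 0$ uniformly for $z \in K$, and the covariance bound $|E[\psi(Z_j)\psi(Z_\ell)]| \le \|\psi\|_\infty\,\|T^{\ell-j}\psi\|_\infty$ for $j \le \ell$ then yields $\frac1n\sum_{j=1}^n \psi(Z_j) \to 0$ in $L^2$, hence in probability. However, Theorem~\ref{FullMethod} supplies no rate for $\|T^m\psi\|_\infty$, so this $L^2$ estimate does not by itself give summability along a subsequence and thus does not upgrade to almost sure convergence. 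This is precisely the gap that Breiman's theorem closes, its proof providing the martingale/backward-averaging argument that transfers the Birkhoff conclusion from the stationary chain (where ergodicity follows from uniqueness of $\mu^b$) to the chain launched from $a$. Verifying Breiman's hypotheses on $K$---the real work---reduces entirely to the unique-ergodicity input of Lemma~\ref{Unique}.
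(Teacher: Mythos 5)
Your proposal is correct and follows essentially the same route as the paper: restrict the chain to the compact backward-invariant set $K$ from Lemma~\ref{LemmaK}, feed the Markov property (Claim~\ref{Markov}) and unique $T^*$-invariance (Lemma~\ref{Unique}) into an ergodic theorem for Feller chains on compact spaces with a unique invariant measure, and then use separability of $C(\CC)$ with an $\varepsilon/3$ argument to obtain one full-measure set of sequences. The only difference is cosmetic: you invoke Breiman's strong law for Markov chains where the paper invokes the Furstenberg--Kifer theorem (Theorem~\ref{FKtheorem}), and these are interchangeable statements in this setting; your explicit Krylov--Bogolyubov verification of unique ergodicity on $K$ is a point the paper leaves implicit.
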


\begin{remark}
Theorem~\ref{main} provides the justification for the ``random backward iteration algorithm" used to graphically approximate $J(G)$.  This method simply plots, for large $n$, the $n$ points in the support of $\mu_{i_1, \dots, i_{n}}^{a}$, i.e., the points of a random backward orbit, where $(i_1, i_2, \dots) \in \Sigma_d^+$ is randomly selected according to $P_b$.  If $a \notin J(G)$, then it is often appropriate to not plot the first hundred or so points in the random backward orbit since the earlier points in the orbit might not be very close to $J(G)$.
\end{remark}


\begin{remark}
The application Julia 2.0~\cite{Julia2.0} operates when the probability vector $b$ is uniform, i.e., each $b_j = \frac1k$.  Thus, Theorem~\ref{main} provides the mathematical justification for its use (when the ``ergodic/random" setting is used).
\end{remark}

\begin{remark}
We note that due to the discontinuous nature of the $g_i$, we may have a very strong dependence on the choice of $a$ regarding the convergence of $\mu_{i_1, \dots, i_{n}}^a$ to $\mu^b$.  In particular, if $\mu_{i_1, \dots, i_{n}}^a \to \mu^b$ for a particular sequence $(i_1, i_2, \dots)$, it may very well be that for a slightly perturbed value $a'$ we have that $\mu_{i_1, \dots, i_{n}}^{a'}$ converges to a measure other than $\mu^b$ or does not converge at all.  For example, consider $G=\langle z^2\rangle$.  Thus $g_1$ and $g_2$ will, at each $z$ take on the values $+\sqrt{z}$ and $-\sqrt{z}$, and can do so \emph{arbitrarily}.  So, supposing that a particular choice $(i_1, i_2, \dots)$ has $\mu_{i_1, \dots, i_{n}}^a \to \mu^b$ for $a=1$ will in no way guarantee the convergence of $\mu_{i_1, \dots, i_{n}}^{a'}$ to $\mu^b$ for $a'=1.1$.  This is because the choices of each $g_{i_j}$ (either $+\sqrt{z}$ or $-\sqrt{z}$) for $a'$ can be made completely independently of the choices for $a$ since the corresponding random backward orbits will be completely disjoint from each other.  In particular, the choices for each $g_i$ along the backward orbit of $a'$ could always be $+\sqrt{z}$, in which case we clearly see that  $\mu_{i_1, \dots, i_{n}}^{a'} \to \delta_1$.

Our setup in this paper allows for $g_i$ that are nowhere continuous, and it is this which causes the problem.  However, this is a robust problem since there is no way to choose $g_i$ in any fashion where they will be everywhere continuous (since some $g_i$ must be right inverses of a degree two or more map) and so we can never expect that backward orbits of nearby points will always be similar, even when chosen according to the same $(i_1, i_2, \dots)$.  This is in stark contrast to the contracting iterated function system case where the strong contraction leads to all orbits along the same $(i_1, i_2, \dots)$ behaving in the same way, i.e., converging closer to each other in the sense that for any $a$ and $a'$ we have \\
$\textrm{dist}(g_{i_n}\circ \dots \circ g_{i_1}(a), g_{i_n}\circ \dots \circ g_{i_1}(a')) \to 0$ as $n \to \infty$~\cite{Elton, RSThesis}.
\end{remark}

\begin{remark}
Let $\Sigma^{a,b} \subset \Sigma_d^+$ be the set of $(i_1, i_2, \dots) \in \Sigma_d^+$ such that $\mu_{i_1, \dots, i_{n}}^a \to \mu^b$, and note that $\mu^{b_1} \neq \mu^{b_2}$ implies $\Sigma^{a,b_1} \cap \Sigma^{a,b_2} = \emptyset$.  Hence we see from Theorem~\ref{main} that $P_{b_1}$ and $P_{b_2}$ are mutually singular when $\mu^{b_1} \neq \mu^{b_2}$.  If it were the case that $b_1 \neq b_2$ implies $\mu^{b_1} \neq \mu^{b_2}$ (which we suspect is generally true for most semigroups and is certainly true for the analogous situation of the iterated function system which generates the Sierpinski triangle), then we would have another proof of the fact that $P_{b_1}$ and $P_{b_2}$ are mutually singular when ${b_1} \neq {b_2}$.  This fact, however, is already known to be true by the following argument.  Since the shift map on $\Sigma_d^+$ is always ergodic with respect to either measure $P_{b_1}$ or $P_{b_2}$ (see~\cite{Walters}, p.33), $P_{b_1}$ and $P_{b_2}$ must be singular by Theorem 6.10(iv) of~\cite{Walters}.

\end{remark}

\section{Consequences of the main result Theorem~\ref{main}}

It is well known (see~\cite{HM1}) that the characterization of $J(G)$ as the smallest closed backward invariant set containing three or more points implies that for any point $a \in \CC \setminus E(G)$, we have
$$J(G) \subseteq \overline{\bigcup_{g \in G} g^{-1}(\{a\})}.$$
Furthermore, if $a \in J(G)\setminus E(G)$, then
$$J(G) = \overline{\bigcup_{g \in G} g^{-1}(\{a\})}.$$

Using Theorem~\ref{main} together with the fact that the support of $\mu^b$ is $J(G)$ no matter what probability vector $b$ is chosen, one can quickly show the following strengthening of these results.  Nearly identical details of the argument can be found in~\cite{HawkinsTaylor} and so we omit them here.

\begin{corollary}\label{Closure}
Let $G$ be as in Remark~\ref{AssumptionsG} and let $a \in \CC \setminus E(G)$.  Let $(i_1, i_2, \dots) \in \cup_b \Sigma^{a,b}$, where the union is taken over all probability vectors $b$.  Then
$$J(G) \subseteq \overline{\bigcup_{j=0}^\infty \{z_{i_1, i_2, \dots, i_j}\}}.$$
Furthermore, if $a \in J(G)$, then
$$J(G) = \overline{\bigcup_{j=0}^\infty \{z_{i_1, i_2, \dots, i_j}\}}.$$
\end{corollary}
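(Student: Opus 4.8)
The plan is to use weak convergence to push the known support of $\mu^b$ onto the closure of the random backward orbit, and then to use backward invariance of $J(G)$ for the reverse inclusion in the second assertion.

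First I would set $S = \overline{\bigcup_{j=0}^\infty \{z_{i_1, i_2, \dots, i_j}\}}$, the closed set appearing in the statement, and observe that by construction each approximating measure $\mu_{i_1, \dots, i_n}^{a} = \frac1n \sum_{j=1}^n \delta_{z_{i_1, \dots, i_j}}$ is a probability measure supported on the finite set $\{z_{i_1, \dots, i_j}: 1 \le j \le n\} \subseteq S$. Since by hypothesis $(i_1, i_2, \dots) \in \Sigma^{a,b}$ for some probability vector $b$, the very definition of $\Sigma^{a,b}$ gives $\mu_{i_1, \dots, i_n}^{a} \to \mu^b$ weakly.

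Next I would transfer the support to the limit. Each $\mu_{i_1, \dots, i_n}^{a}$ satisfies $\mu_{i_1, \dots, i_n}^{a}(S)=1$, and since $S$ is closed the Portmanteau inequality $\limsup_n \mu_{i_1, \dots, i_n}^{a}(S) \le \mu^b(S)$ forces $\mu^b(S)=1$. Because $\mathrm{supp}(\mu^b)$ is the smallest closed set of full $\mu^b$-measure, this yields $\mathrm{supp}(\mu^b) \subseteq S$. By Theorem~\ref{FullMethod}, $\mathrm{supp}(\mu^b) = J(G)$ for every $b$, and therefore $J(G) \subseteq S$, which is the first assertion. For the equality when $a \in J(G)$ I would prove the reverse inclusion $S \subseteq J(G)$ by showing the whole backward orbit lies in $J(G)$. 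Each $g_i$ is a right inverse of some generator $f_{j(i)}$, so $z_{i_1, \dots, i_j} = g_{i_j}(z_{i_1, \dots, i_{j-1}}) \in f_{j(i_j)}^{-1}(\{z_{i_1, \dots, i_{j-1}}\})$; starting from $z_0 = a \in J(G)$ and invoking the backward invariance $f^{-1}(J(G)) \subseteq J(G)$ for each generator, an induction on $j$ gives $z_{i_1, \dots, i_j} \in J(G)$ for all $j \ge 0$. As $J(G)$ is closed, $S \subseteq J(G)$, and combined with $J(G) \subseteq S$ this gives $J(G) = S$.

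The only nonroutine point is the support-transfer in the middle paragraph, and it is genuinely mild: it is exactly the standard fact that a weak limit of probability measures all supported on one fixed closed set is again supported there, for which the closed-set half of the Portmanteau theorem suffices. Everything else is bookkeeping with the definitions of $S$ and $\Sigma^{a,b}$ together with the backward invariance of $J(G)$ already recorded in the excerpt, which is presumably why the authors defer to the analogous argument in~\cite{HawkinsTaylor}.
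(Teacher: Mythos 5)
Your proof is correct and follows exactly the route the paper intends (but omits, deferring to the analogous argument in~\cite{HawkinsTaylor}): the definition of $\Sigma^{a,b}$ gives weak convergence $\mu_{i_1,\dots,i_n}^{a}\to\mu^b$, the Portmanteau inequality transfers full measure to the closed orbit closure so that $J(G)=\mathrm{supp}(\mu^b)$ is contained in it, and backward invariance of $J(G)$ gives the reverse inclusion when $a\in J(G)$. No gaps.
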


\begin{remark}
Meeting the conclusion of this corollary is generally what one is looking for when saying that the ``random backward iteration" method works in drawing (an approximation of) $J(G)$.  Note that this conclusion holds for $P_b$ a.a. $(i_1, i_2, \dots) \in \cup_b \Sigma_d^{+}$, regardless of the choice of $b$.
\end{remark}

Suppose $(i_1, i_2, \dots) \in \Sigma_d^+$ is chosen so that $\mu_{i_1, \dots, i_{n}}^a= \frac1{n} \sum_{j=1}^n \delta_{z_{i_1, i_2, \dots, i_j}}$ converges to $\mu^b$.  Since the support of $\mu^b$ is $J(G)$, it is clear that the points $z_{i_1, i_2, \dots, i_j}$ of the backward orbit cannot visit any compact neighborhood disjoint from $J(G)$ too often, else this would imply that the $\mu^b$ measure of such a neighborhood is positive.  In this sense, \textit{most} of the points $z_{i_1, i_2, \dots, i_j}$ approach $J(G)$.  The following result, in fact, says more is true under certain circumstances.

\begin{corollary}\label{RandomLimitSet}
Let $G$ be as in Remark~\ref{AssumptionsG} and let $a \in \CC \setminus E(G)$.  Let $(i_1, i_2, \dots) \in \cup_b \Sigma^{a,b}$, where the union is taken over all probability vectors $b$.  Suppose $J(G)$ has interior or $J(G) \setminus P(G) \neq \emptyset$, where $P(G)$ denotes the postcritical set $\overline{\cup_{g \in G} \{ \textrm{critical values of }g \}}$.  Then we have
$z_{i_1, i_2, \dots, i_j} \to J(G)$, i.e., $\textrm{dist}(z_{i_1, i_2, \dots, i_j}, J(G)) \to 0$ as $j \to \infty$.
\end{corollary}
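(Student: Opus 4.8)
The plan is to show that the cluster set of the backward orbit $(z_{i_1,\dots,i_j})_j$ is contained in $J(G)$; since every $z_{i_1,\dots,i_j}$ lies in the compact backward-invariant set $K$ of Lemma~\ref{LemmaK}, this is equivalent to $\textrm{dist}(z_{i_1,\dots,i_j},J(G))\to 0$. If $a\in J(G)$ the conclusion is immediate, because backward invariance of $J(G)$ keeps the whole orbit inside $J(G)$, so I assume $a\in F(G)\setminus E(G)$. The engine common to both hypotheses is a \emph{recurrence} fact: since $\mu^b=\mu^b_G$ has closed support $J(G)$ (Theorem~\ref{FullMethod}) and $\mu_{i_1,\dots,i_n}^{a}\to\mu^b$ by hypothesis, the Portmanteau theorem gives $\liminf_n \mu_{i_1,\dots,i_n}^{a}(U)\ge \mu^b(U)>0$ for every open $U$ meeting $J(G)$. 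Hence the orbit visits every such $U$ with positive frequency, and in particular enters it at some (indeed infinitely many) times. The strategy is then to locate a \emph{good} open set $U$ meeting $J(G)$ with the property that, once the orbit enters $U$, every later point is forced to stay near $J(G)$; recurrence supplies an entry time $j_0$, after which control of the tail yields the claim.

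Suppose first that $J(G)$ has nonempty interior, and take $U$ to be a ball inside $\textrm{int}\,J(G)$. Here the good set is trapping in the strongest sense: if $w\in\textrm{int}\,J(G)$ and $f_l(v)=w$, then since $f_l$ is an open map there is an open $M\ni v$ with $f_l(M)\subseteq\textrm{int}\,J(G)\subseteq J(G)$, whence $M\subseteq f_l^{-1}(J(G))\subseteq J(G)$ by backward invariance, so $v\in\textrm{int}\,J(G)$. Thus every preimage of an interior point of $J(G)$ is again an interior point, and in particular $z_{i_1,\dots,i_{j+1}}=g_{i_{j+1}}(z_{i_1,\dots,i_j})\in\textrm{int}\,J(G)$ whenever $z_{i_1,\dots,i_j}\in\textrm{int}\,J(G)$. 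By recurrence the orbit enters $U$ at some time $j_0$, and then $z_{i_1,\dots,i_j}\in J(G)$ for all $j\ge j_0$, giving the conclusion trivially.

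Now suppose $J(G)\setminus P(G)\neq\emptyset$ and fix $\zeta\in J(G)\setminus P(G)$ and $r>0$ so small that $V:=B(\zeta,2r)$ is disjoint from the closed set $P(G)$. Two facts drive the argument. First, $P(G)$ is \emph{forward} invariant under $G$: if $c$ is a critical point of $h\in G$ then $c$ is also a critical point of $g\circ h\in G$, so $g(h(c))$ is a critical value of $g\circ h$, giving $g(P(G))\subseteq P(G)$ and hence that every inverse branch maps $\CC\setminus P(G)$ into itself. Second, since $V$ avoids all critical values of every $g\in G$, each $g\in G$ has $\deg g$ univalent holomorphic inverse branches defined on $V$, all mapping $V$ into $\CC\setminus P(G)$. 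Taking $U=B(\zeta,r)\subseteq V$ and writing the orbit from an entry time $j_0$ (with $z_{i_1,\dots,i_{j_0}}\in U$) as $z_{i_1,\dots,i_j}=\tilde\psi_{w}(z_{i_1,\dots,i_{j_0}})$, where $w$ is the word $i_j\cdots i_{j_0+1}$ and $\tilde\psi_w$ is the univalent branch on $V$ that threads the (genuine) backward orbit, one has $\tilde\psi_w(\zeta)\in g^{-1}(\{\zeta\})\subseteq J(G)$ by backward invariance. Therefore $\textrm{dist}(z_{i_1,\dots,i_j},J(G))\le \textrm{diam}_{\textrm{sph}}\,\tilde\psi_w(V)$, and it remains to prove that these diameters tend to $0$ as $|w|=j-j_0\to\infty$, \emph{uniformly} over the branches involved.

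This last point is the heart of the matter and where I expect the main difficulty. The family of all inverse branches of all $g\in G$, restricted to $V$, maps into the hyperbolic domain $\CC\setminus P(G)$ (assuming, after disposing of the degenerate case $\textrm{card}\,P(G)\le 2$ separately, that $P(G)$ has at least three points), hence is normal by Montel's theorem, and each branch is a hyperbolic contraction of $\CC\setminus P(G)$ by Schwarz--Pick. The desired uniform shrinking follows once one shows that \emph{every} locally uniform limit $\phi$ of such branches with $|w|\to\infty$ is constant: a nonconstant univalent $\phi$ would make the forward maps $(\tilde\psi_w)^{-1}$ a normal family on the open set $\phi(V)$, contradicting $\zeta\in J(G)$. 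Constancy of all limits forces $\textrm{diam}_{\textrm{sph}}\,\tilde\psi_w(V)\to 0$ uniformly, and since the centers $\tilde\psi_w(\zeta)$ lie in $J(G)$, the tail of the orbit is pinned to $J(G)$. The delicate step is precisely this upgrade from hyperbolic contraction to uniform \emph{spherical} shrinking, since a priori an image could drift toward $P(G)$, where the two metrics are incomparable; controlling this, exactly as in the single-map argument of~\cite{HawkinsTaylor}, is the crux, after which recurrence to $B(\zeta,r)$ completes the proof as in the interior case.
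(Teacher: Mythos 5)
Your architecture tracks the paper's own proof closely: the recurrence observation (Portmanteau plus $\mathrm{supp}\,\mu^b=J(G)$), the interior case via backward invariance of $J(G)$, and, in the second case, recurrence into a small disk about a point of $J(G)\setminus P(G)$ on which every $g\in G$ has single-valued analytic inverse branches, each carrying that point into $J(G)$. The genuine gap is at the step you yourself flag as the crux: you never prove the uniform shrinking $\textrm{diam}_{\textrm{sph}}\,\tilde\psi_w(V)\to 0$ as $|w|\to\infty$, and the sketch you offer for it is not sound as stated. A nonconstant univalent limit $\phi$ of branches would at best give normality of one particular \emph{sequence} of elements of $G$ on the open set $\phi(V)$, and that is not in itself a contradiction with $\phi(\zeta)\in J(G)$, because $J(G)$ is the non-normality locus of the \emph{whole} family $G$; even for iteration this step is delicate (inverse branches need not shrink on disks avoiding the postcritical set, e.g.\ inside rotation domains, and the shrinking argument must genuinely exploit that the disk meets the Julia set). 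Moreover, your Montel step needs $\textrm{card}\,P(G)\geq 3$, and the ``degenerate case'' you set aside is a real one: $G=\langle z^2, z^3\rangle$ satisfies Remark~\ref{AssumptionsG}, has $P(G)=\{0,\infty\}$ and $J(G)\setminus P(G)=J(G)\neq\emptyset$, and there $\CC\setminus P(G)$ omits only two points, so Montel gives no normality at all.

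The paper sidesteps both problems because shrinking with word length is never needed. It invokes Lemma 4.5 of~\cite{Su3} (proved under the standing hypotheses of Remark~\ref{AssumptionsG}, which in particular cover examples like $\langle z^2,z^3\rangle$): the family $\mathcal{F}$ of \emph{all} inverse branches of \emph{all} $g\in G$ on $\Delta(z,\eta)$ is normal on some smaller disk $\Delta(z,\delta')$. Plain equicontinuity at $z$ then finishes the argument: given $\epsilon>0$ there is $\delta>0$ with $\textrm{diam}\bigl(h(\Delta(z,\delta))\bigr)<\epsilon/2$ for \emph{every} $h\in\mathcal{F}$, irrespective of the length of the word defining $h$. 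Once the orbit enters $\Delta(z,\delta)$ at some time $j_0$ (your recurrence step), every later point $z_{i_1, i_2, \dots, i_j}=h_j(z_{i_1, i_2, \dots, i_{j_0}})$ lies within $\epsilon/2$ of $h_j(z)\in J(G)$, so $\textrm{dist}(z_{i_1, i_2, \dots, i_j},J(G))<\epsilon/2$ for all $j>j_0$; since $\epsilon$ was arbitrary, the claim follows. If you replace your limit-function analysis by this equicontinuity argument, citing the normality lemma rather than Montel, your proof closes; the rest of your write-up is consistent with the paper's.
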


The proof of the analogous result in~\cite{HawkinsTaylor} for single function iteration utilizes Sullivan's No Wandering Domains Theorem (see~\cite{Sullivan}) and does not require the above stated hypothesis on $J(G)$.  However, the more broad setting here of rational semigroups does not offer such a No Wandering Domains Theorem, in general.  Thus our methods below offer a new approach, which we note is also applicable to the iteration result found in~\cite{HawkinsTaylor}, in those cases where $J(f) \setminus P(f) \neq \emptyset$.  Note that $J(f)$ cannot have interior without $J(f)=\CC$, which would make the conclusion of this result trivial.

\begin{proof}
Let $(i_1, i_2, \dots) \in \Sigma_d^+$ be such that $\mu_{i_1, \dots, i_{n}}^a \to \mu^b$.

First suppose that a $z$ can be chosen from the interior of $J(G)$ and note that, unlike in the iteration case, it is still possible for $J(G)$ to not equal all of $\CC$ (see\cite{HM1}).  Letting $D$ be a small spherical disk centered at $z$ and contained in $J(G)$, we see that $\mu^b(D)>0$.  Since $\mu_{i_1, \dots, i_{n}}^a \to \mu^b$, some $z_{i_1, i_2, \dots, i_{j_0}}$ (infinitely many actually) must visit $D$ and hence lie in $J(G)$.  But since $J(G)$ is backward invariant, each $z_{i_1, i_2, \dots, i_j}$ for $j \geq j_0$ must also lie in $J(G)$ and so the conclusion holds.

Now suppose $z \in J(G) \setminus P(G)$, but there exists some $\epsilon>0$ such that $\textrm{dist}(z_{i_1, i_2, \dots, i_{j_n}}, J(G)) \geq \epsilon$ along the subsequence $j_n$.  Let $\eta >0$ be chosen so that the spherical disk $\Delta(z, \eta)$ does not meet $P(G)$.  This implies that each $g \in G$ has degree$(g)$ well-defined analytic inverses on $\Delta(z, \eta)$.  By Lemma 4.5 of~\cite{Su3}, the family $\mathcal{F}$ of all these inverses for all $g \in G$ is normal on some disk $\Delta(z, \delta')$.  By equicontinuity of $\mathcal{F}$, there exists $\delta>0$ such that diam$(h(\Delta(z, \delta))<\epsilon/2$ for all $h \in \mathcal{F}$.  Since $\mu^b(\Delta(z, \delta))>0$ and $\mu_{i_1, \dots, i_{n}}^a \to \mu^b$, there exists some $z_{i_1, i_2, \dots, i_{j_0}}$ in $\Delta(z, \delta)$.  However, each $z_{i_1, i_2, \dots, i_{j}}$, for $j>j_0$, must equal $h_{j}(z_{i_1, i_2, \dots, i_{j_0}})$ for some $h_{j} \in \mathcal{F}$ (specifically $h_{j}$ must one of the branches of the inverse of $f_{i_{j_0+1}}\circ \dots \circ f_{i_j}$).  But then $\textrm{dist}(z_{i_1, i_2, \dots, i_j}, h_{j}(z)) < \epsilon/2$.  Since by backward invariance $h_{j}(z) \in J(G)$, we have a contradiction to the assumption that $\textrm{dist}(z_{i_1, i_2, \dots, i_{j_n}}, J(G)) \geq \epsilon$ along the subsequence $j_n$.
\end{proof}

\begin{remark}
Note that the maps $h_{j} \in \mathcal{F}$ chosen in this proof were not written as $g_{i_j}\circ \dots \circ g_{i_{j_0+1}}$ since, as mentioned in Remark~\ref{Arbitrary}, the $g_i$ might fail to even be continuous.  However, given that $z \notin P(G)$, we could have chosen the $g_i$ to be such that every finite composition of the $g_i$ is analytic in the disk $\Delta(z, \eta)$.
\end{remark}

\begin{remark}
There are many examples of $G$ which satisfy (1)--(3) in Remark 1.1 such that $J(G)$ has interior but where $J(G)$ is not the Riemann sphere. For example, any finitely generated $G$ consisting of polynomials of degree two or more whose Julia set contains a superattracting fixed point of some element $g\in G$ is such an example. See~\cite{HMJ}.
\end{remark}

\begin{remark}
We note that Corollaries~\ref{Closure} and~\ref{RandomLimitSet} combine to show that for the indicated $G$, sequences $(i_1, i_2, \dots) \in \cup_b \Sigma^{a,b}$ generate a backward orbit whose $\omega$-limit set is exactly $J(G)$, and so we see that $d_H(\overline{\cup_{j \geq n} \{z_{i_n, i_2, \dots, i_{j}}\}},J(G)) \to 0$ as $n \to \infty$ where $d_H$ denotes the Hausdorff metric.
\end{remark}

\section{Furstenberg-Kifer Random walks and law of large numbers}

In this section we introduce a key result of Furstenberg and Kifer to relate the invariance of the measure $\mu^b$ to the convergence of Cesaro averages of $\phi(z_n)$ where $\{z_n\}$ is a random backward orbit and $\phi \in C(\CC)$.
As in~\cite{HawkinsTaylor}, this result is the key tool in the proof of our main theorem.

Let $M$ be a compact metric space and let $\mathcal{P}(M)$ be the space of Borel probability measures on $M$, noting that $\mathcal{P}(M)$ is a compact metric space in the topology of weak* convergence.  Suppose there exists a continuous map $M \to \mathcal{P}(M)$ assigning to each $x \in M$ a measure $\mu_x$.  The corresponding Markov operator $H:C(M) \to \R$ is given by
$$Hf(x)= \int f(y) \, d\mu_x(y).$$

Suppose the stochastic process $\{X_n:n=0, 1, 2, \dots\}$ is a Markov process corresponding to $H$, i.e., $P(\{X_{n+1} \in A|X_0, X_1, \dots, X_n\}) = \mu_{X_n}(A)$.

Given these assumptions we then have the following, which is a weaker version (but sufficient for our purposes) of Theorem 1.4 in~\cite{FurstKifer}.

\begin{theorem}[\cite{FurstKifer}]\label{FKtheorem}
Assume that there is a unique probability measure $\nu$ on $M$ that is invariant under the adjoint operator $H^*$ on $\mathcal{P}(M)$ and let $\phi \in C(M)$.  Then with probability one
$$\frac1{N+1} \sum_{n=0}^N \phi(X_n) \to \int \phi \, d\nu$$
as $N \to \infty$.
\end{theorem}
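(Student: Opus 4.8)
The plan is to recast the statement as a claim about the \emph{empirical measures}
$$\eta_N = \frac{1}{N+1}\sum_{n=0}^N \delta_{X_n} \in \mathcal{P}(M),$$
and to prove that, with probability one, $\eta_N \to \nu$ in the weak* topology; pairing against the fixed $\phi$ then yields the assertion, since $\langle \phi, \eta_N\rangle = \frac{1}{N+1}\sum_{n=0}^N \phi(X_n)$ and $\langle\phi,\nu\rangle = \int\phi\,d\nu$. Because $M$ is compact metric, $\mathcal{P}(M)$ is compact and metrizable in the weak* topology, so it suffices to show that almost surely every subsequential weak* limit of $\{\eta_N\}$ equals $\nu$. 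By the hypothesized uniqueness of the $H^*$-invariant measure, this reduces in turn to showing that almost surely every such limit point is $H^*$-invariant.

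The engine for the invariance is a Dynkin martingale decomposition. For $\psi\in C(M)$ set $D_n^\psi = \psi(X_{n+1}) - H\psi(X_n)$. Writing $\mathcal{F}_n = \sigma(X_0,\dots,X_n)$, the Markov property $P(X_{n+1}\in\cdot\,|\mathcal{F}_n) = \mu_{X_n}$ gives $E[\psi(X_{n+1})\,|\,\mathcal{F}_n] = H\psi(X_n)$, so the $D_n^\psi$ are martingale differences bounded by $2\|\psi\|_\infty$. I would then record the telescoping identity
$$\frac{1}{N+1}\sum_{n=0}^N \big(H\psi(X_n) - \psi(X_n)\big) = -\frac{1}{N+1}\sum_{n=0}^N D_n^\psi + \frac{\psi(X_{N+1}) - \psi(X_0)}{N+1}.$$
The last term vanishes as $N\to\infty$ since $\psi$ is bounded, while $\frac{1}{N+1}\sum_{n=0}^N D_n^\psi \to 0$ almost surely by the martingale strong law: the martingale $\sum_{n\ge 0} D_n^\psi/(n+1)$ is $L^2$-bounded (its increments have variance at most $4\|\psi\|_\infty^2/(n+1)^2$, which is summable), hence converges almost surely, and Kronecker's lemma finishes. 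Thus $\langle H\psi - \psi, \eta_N\rangle \to 0$ almost surely for each fixed $\psi$.

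To upgrade this to all test functions at once I would use separability of $C(M)$ (valid since $M$ is compact metric): fix a countable dense set $\{\psi_k\}\subset C(M)$ and intersect the countably many full-measure events to obtain a single event of probability one on which $\langle H\psi_k - \psi_k, \eta_N\rangle \to 0$ for every $k$. On this event, if $\eta_{N_j}\to\eta$ weakly then $\langle H\psi_k - \psi_k, \eta\rangle = \lim_j \langle H\psi_k - \psi_k, \eta_{N_j}\rangle = 0$ for all $k$, and density together with $\|H\|\le 1$ (which gives $H\psi_k - \psi_k \to H\psi - \psi$ uniformly) extends this to $\langle H\psi - \psi, \eta\rangle = 0$ for every $\psi\in C(M)$; equivalently $H^*\eta = \eta$. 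Uniqueness then forces $\eta = \nu$, so every subsequential limit is $\nu$, whence $\eta_N\to\nu$ almost surely and the theorem follows.

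The step I expect to be most delicate is the passage from a fixed $\psi$ to all $\psi$ simultaneously: the martingale law of large numbers only produces, for each $\psi$ separately, a full-measure event, and the argument hinges on collapsing these into one good event and then transferring the limiting identity to an arbitrary continuous test function via density and the continuity of $x\mapsto\mu_x$ (the Feller property that guarantees $H\psi\in C(M)$). A secondary point worth care is that uniqueness of the invariant measure is precisely what pins the limit to the \emph{constant} $\int\phi\,d\nu$ rather than to a random limit; without it one would obtain only convergence to the barycenter of some random limiting invariant measure.
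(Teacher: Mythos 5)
Your argument is correct, but there is nothing in the paper to compare it against: the paper does not prove this statement, it simply quotes it as a weakened form of Theorem 1.4 of Furstenberg and Kifer, so the ``paper's proof'' is a citation. What you have supplied is the standard route by which results of this type are established, and every step checks out: the telescoping identity $\frac{1}{N+1}\sum_{n=0}^N\bigl(H\psi(X_n)-\psi(X_n)\bigr) = -\frac{1}{N+1}\sum_{n=0}^N D_n^\psi + \frac{\psi(X_{N+1})-\psi(X_0)}{N+1}$ is right; the strong law for the bounded martingale differences $D_n^\psi$ via the $L^2$-bounded martingale $\sum_n D_n^\psi/(n+1)$ and Kronecker's lemma is the correct mechanism; the Feller property (continuity of $x\mapsto\mu_x$) is invoked exactly where it is needed, namely to ensure $H\psi\in C(M)$ so that $\langle H\psi_k-\psi_k,\eta_{N_j}\rangle\to\langle H\psi_k-\psi_k,\eta\rangle$ along weakly convergent subsequences; the passage from a countable dense family $\{\psi_k\}$ to all of $C(M)$ uses $\|H\|\le 1$ correctly; and compactness and metrizability of $\mathcal{P}(M)$ plus uniqueness of the $H^*$-invariant measure legitimately upgrade ``every subsequential limit is $\nu$'' to $\eta_N\to\nu$ almost surely. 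Your closing remark is also on target: without uniqueness one only gets that a.s.\ limit points of the empirical measures are invariant, which is essentially the content of the stronger Furstenberg--Kifer theorem that the paper's quoted version weakens. The practical benefit of your write-up is that it makes this ingredient of the paper self-contained, at the cost of about a page of standard martingale machinery that the authors chose to outsource to the reference.
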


\section{Proof of Theorem~\ref{main}}

By Lemma~\ref{LemmaK} we may choose a compact set $K \subseteq \CC \setminus E(G)$ that is backward invariant under $G$.  Let $\phi \in C(\CC)$ and note that its restriction to $K$ lies in $C(K)$.  Because $K$ is backward invariant we may then apply Theorem~\ref{FKtheorem} using $M=K$, $\mu_x = \mu_1^{z,b}$, $H=T$, $\nu = \mu^b$ and $X_n = Z_n$, noting that all the hypotheses have been met by Lemma~\ref{Unique} and Claim~\ref{Markov}.  Thus we obtain a set $\Sigma_\phi \subseteq \Sigma_d^+$ with $P_b(\Sigma_\phi)=1$ such that for all $(i_1, i_2, \dots) \in \Sigma_\phi$ we have $$\langle \phi, \mu_{i_1, \dots, i_{n}}^a\rangle = \frac1{n} \sum_{j=1}^n \phi(z_{i_1, i_2, \dots, i_j}) \to \int \phi \, d\mu^b= \langle \phi, \mu^b \rangle.$$

Since the set $\Sigma_\phi$ depends on $\phi$, we require an extra step to achieve single such set to work for all maps in $C(\CC)$.  Though it is perhaps standard, and was omitted in~\cite{HawkinsTaylor}, we include it here for the sake of completeness.

Since $\CC$ is compact we know that $C(\CC)$ is separable.  Let $\{\phi_j\}$ be
dense in $C(\CC)$.  Let $\Sigma_0=\cap_{j=1}^{\infty} \Sigma_{\phi_j}$ and
note that $P_b(\Sigma_0)=1$.  Let $\psi \in C(K)$.  Let $\epsilon>0$.
Select $\phi_j$ such that $\|\psi-\phi_j\|_{\infty} < \epsilon$.  Then for all $(i_1, i_2, \dots) \in \Sigma_0$, we have
$|{1 \over n}
\sum_{j=1}^n \psi(z_{i_1, i_2, \dots, i_j})
-\int_K \psi\,d\mu^b|
\leq |{1 \over n} \sum_{j=1}^n \psi(z_{i_1, i_2, \dots, i_j})
-{1 \over n} \sum_{j=1}^n \phi_j(z_{i_1, i_2, \dots, i_j})|
+|{1 \over n} \sum_{j=1}^n \phi_j(z_{i_1, i_2, \dots, i_j})
- \int_K \phi_j\,d\mu^b|
\,\,\,\,\,\,\,\,+ |\int_K \phi_j\,d\mu^b - \int_K \psi\,d\mu^b|
< \epsilon + |{1 \over n} \sum_{j=1}^n \phi_j(z_{i_1, i_2, \dots, i_j})
- \int_K \phi_j\,d\mu^b| +\epsilon
<3\epsilon$
for large $n$.


Hence on $\Sigma_0$ we get the convergence we seek, for all $\psi \in C(K)$ and this completes the proof.

Acknowledgement. The research of the second author was partially supported by JSPS KAKENHI 24540211.

\bibliographystyle{plain}
\bibliography{kyoto}

\end{document}